\begin{document}
\title[ Positive solutions ]{Positive solutions of a nonlinear three-point eigenvalue problem with integral boundary conditions}
\author[F. Haddouchi, S. Benaicha]{Faouzi Haddouchi, Slimane Benaicha}
\address{Faouzi Haddouchi\\
Faculty of Physics, University of Sciences and Technology of
Oran-MB, El Mnaouar, BP 1505, 31000 Oran, Algeria}
\email{fhaddouchi@gmail.com}
\address{Slimane Benaicha \\
Department of Mathematics, University of Oran 1 Ahmed Benbella, 31000 Oran,
Algeria} \email{slimanebenaicha@yahoo.fr}
\subjclass[2000]{34B15, 34C25, 34B18}
\keywords{Positive solutions; Krasnoselskii's  fixed point theorem; Three-point integral boundary
value problems; Eigenvalue; Cone.}

\begin{abstract}
In this paper, we study the
existence of positive solutions of a three-point integral boundary value
problem (BVP) for the following second-order differential equation
\begin{equation*}
\begin{gathered}
{u^{\prime \prime }}(t)+\lambda a(t)f(u(t))=0,\ \ 0<t<1, \\
u^{\prime}(0)=0, \ u(1)={\alpha}\int_{0}^{\eta}u(s)ds,
\end{gathered}
\end{equation*}
where $\lambda>0$ is a parameter, $0<{\eta}<1$, $0<{\alpha}< \frac{1}{{\eta}}$.
By using the properties of the Green's function and Krasnoselskii's fixed point theorem on cones,
the eigenvalue intervals of the nonlinear boundary value problem are considered, some sufficient conditions for the existence of at least one positive solutions are established.
\end{abstract}

\maketitle \numberwithin{equation}{section}
\newtheorem{theorem}{Theorem}[section]
\newtheorem{lemma}[theorem]{Lemma} \newtheorem{proposition}[theorem]{%
Proposition} \newtheorem{corollary}[theorem]{Corollary} \newtheorem{remark}[%
theorem]{Remark}
\newtheorem{exmp}{Example}[section]

\section{Introduction}
{\footnotesize In this work, we study the existence of positive solutions of a three-point integral boundary value
problem (BVP) for the following second-order differential equation:

\begin{equation} \label{eq-1.1}
{u^{\prime \prime }}(t)+\lambda a(t)f(u(t))=0,\ t\in(0,1),
\end{equation}
\begin{equation} \label{eq-1.2}
u^{\prime}(0)=0, \ u(1)={\alpha}\int_{0}^{\eta}u(s)ds,
\end{equation}
where $0<{\eta}<1$ and $0<{\alpha}<\frac{1}{{\eta}}$, $\lambda$ is a positive parameter, and
\begin{itemize}
\item[(H1)] $f\in C([0,\infty),[0,\infty))$;
\item[(H2)] $a\in C([0,1],[0,\infty))$ and there exists $t_{0}\in[0,\eta]$ such that
$a(t_{0})>0$.
\end{itemize}

The study of the existence of solutions of multi-point boundary value
problems for linear second-order ordinary differential equations was
initiated by II'in and Moiseev \cite{Ilin}.
Then Gupta \cite{Gupt} studied three-point boundary value problems for nonlinear
second-order ordinary differential equations. Since then, the existence of positive
solutions for nonlinear second order three-point boundary-value problems has been
studied by many authors by using the fixed point theorem, nonlinear alternative of the Leray-Schauder
approach, or coincidence degree theory. We refer the reader to
 \cite{Cheng}, \cite{Ander2},\cite{Ander1}, \cite{Feng1}, \cite{Guo},\cite{Han},\cite{He}, \cite{Li},\cite{Luo},\cite{Liang1},\cite{Liang2},\cite{Liu1},\cite{Liu3},\cite{Ma1},\cite{Ma2},
\cite{Ma3},\cite{Ma4},\cite{Ma5}, \cite{Maran},\cite{Pang}, \cite{Sun1},\cite{Sun2},\cite{Tarib},\cite{Webb1},\cite{Xu} and the references therein.

Liu \cite{Liu1} proved the existence of single and multiple positive solutions for the three-point boundary value problem (BVP)
\begin{equation} \label{eq-1.3}
{u^{\prime \prime }}(t)+a(t)f(u(t))=0,\ t\in(0,1),
\end{equation}
\begin{equation} \label{eq-1.4}
u^{\prime}(0)=0, \ u(1)={\beta}u(\eta),
\end{equation}
where $0<{\eta}<1$ and $0<{\beta}<1$.

Recently, Ma \cite{Ma7} studied the second-order three-point boundary value
problem (BVP)
\begin{equation} \label{eq-1.5}
{u^{\prime \prime }}(t)+\lambda a(t)f(u(t))=0,\ t\in(0,1),
\end{equation}
\begin{equation} \label{eq-1.6}
u(0)=\beta u(\eta), \ u(1)={\alpha}u(\eta),
\end{equation}
where $\lambda>0$ is a parameter, $\alpha> 0$, $\beta> 0$, $0 < \eta< 1$, $a\in C([0, 1], [0, \infty))$, $f\in C([0, \infty), [0, \infty))$ and there exists $x_{0}\in(0, 1)$ such that $a(x_{0}) > 0$.
She obtained the existence of single and multiple positive solutions by using Krasnoselskii's fixed point theorem in cones \cite{Krasn}.

Boundary value problems with integral boundary conditions for ordinary differential equations represent a very interesting and important class of problems, and arise in the study of various physical, biological and chemical processes, such as heat conduction, chemical engineering, underground water flow, thermo-elasticity, and plasma physics. They include two, three, multi-point and nonlocal BVPs as special cases. The existence of positive solutions for such class of problems has attracted much attention (see \cite{Bou},\cite{Feng2},\cite{Jiang},\cite{Kang},\cite{Kong},\cite{Purna},\cite{Tarib},\cite{Zhang} and the references therein).

In \cite{Tarib}, Tariboon and Sitthiwirattham investigated the existence of positive solutions of the following three-point integral boundary value problem (BVP)

\begin{equation} \label{eq-1.7}
{u^{\prime \prime }}(t)+a(t)f(u(t))=0,\ t\in(0,1),
\end{equation}
\begin{equation} \label{eq-1.8}
u(0)=0,\ u(1)={\alpha}\int_{0}^{\eta}u(s)ds,
\end{equation}
where $0<{\eta}<1$ and $0<{\alpha}<\frac{2}{{\eta}^{2}}$,
$f\in C([0,\infty),[0,\infty))$, $a\in
C([0,1],[0,\infty))$ and there exists $t_{0}\in[\eta,1]$ such that $a(t_{0})>0$. They showed
the existence of at least one positive solution if $f$ is either superlinear or sublinear by applying Krasnoselskii's fixed point theorem in cones \cite{Krasn}.

In \cite{Haddou1}, by using Leggett-Williams fixed-point theorem, the authors considered the multiplicity of positive solutions of the following three-point integral boundary value problem (BVP)
\begin{equation} \label{eq-1.9}
{u^{\prime \prime }}(t)+f(t, u(t))=0,\ t\in(0,T),
\end{equation}
\begin{equation} \label{eq-1.10}
u(0)={\beta}u(\eta),\ u(T)={\alpha}\int_{0}^{\eta}u(s)ds,
\end{equation}
where $0<{\eta}<T$, $0<{\alpha}<\frac{2T}{{\eta}^{2}}$,
$0\leq{\beta}<\frac{2T-\alpha\eta^{2}}{\alpha\eta^{2}-2\eta+2T}$, and $f\in C([0, T]\times[0,\infty),[0,\infty))$.

Motivated greatly by the above-mentioned excellent works, the aim of this paper is to establish some sufficient conditions for the existence of at least one positive solutions of the BVP \eqref{eq-1.1} and \eqref{eq-1.2}. Our ideas are similar those used in \cite{Ma7}, but a little different.

We firstly give the corresponding Green's function for the associated linear BVP and some of its properties.
Moreover, by applying Krasnoselskii's fixed point theorem, we derive an interval of $\lambda$ on which there exists a positive solution for the three-point integral boundary value problem \eqref{eq-1.1} and \eqref{eq-1.2}.

As applications, some interesting examples are presented to illustrate the main results. The key tool in our approach is the
following Krasnoselskii's fixed point theorem in a cone \cite{Krasn}.

\begin{theorem}\label{theo 1.1}\cite{Krasn}.
Let $E$ be a Banach space, and let $K\subset E$ be a cone.
Assume that $\Omega_{1}$, $\Omega _{2}$ are open bounded subsets of $E$ with $0\in \Omega _{1}$%
, $\overline{\Omega }_{1}\subset \Omega _{2}$, and let

\begin{equation*}\label{eq-11}
A: K\cap (\overline{\Omega }_{2}\backslash  \Omega
_{1})\longrightarrow K
\end{equation*}

be a completely continuous operator such that either

(i) $\ \left\Vert Au\right\Vert \leq \left\Vert u\right\Vert $, $\
u\in K\cap \partial \Omega _{1}$, \ and $\left\Vert Au\right\Vert
\geq \left\Vert u\right\Vert $, $\ u\in K\cap \partial \Omega _{2}$;
or

(ii) $\left\Vert Au\right\Vert \geq \left\Vert u\right\Vert $, $\
u\in K\cap
\partial \Omega _{1}$, \ and $\left\Vert Au\right\Vert \leq \left\Vert
u\right\Vert $, $\ u\in K\cap \partial \Omega _{2}$

hold. Then $A$ has a fixed point in $K\cap
(\overline{\Omega}_{2}\backslash $ $\Omega _{1})$.
\end{theorem}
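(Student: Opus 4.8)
The statement is the classical Krasnoselskii cone compression--expansion theorem (see \cite{Krasn}), and the plan is to deduce it from the fixed point index for completely continuous self-maps of a cone. Recall that for $\Omega$ a bounded open subset of $E$ containing $0$, and a completely continuous map $F\colon K\cap\overline{\Omega}\to K$ with $Fu\neq u$ on $K\cap\partial\Omega$, one has a well-defined integer $i(F,K\cap\Omega,K)$ satisfying the usual axioms: normalization ($i(F,K\cap\Omega,K)=1$ whenever $F$ is a constant map with value in $K\cap\Omega$), additivity, homotopy invariance, and the solution property ($i(F,K\cap\Omega,K)\neq 0$ implies $F$ has a fixed point in $K\cap\Omega$). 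First, after extending $A$ to a completely continuous map of $K\cap\overline{\Omega}_2$ into $K$ if needed (a Dugundji-type extension followed by a retraction onto $K$), I note that we may assume $A$ has no fixed point on $K\cap\partial\Omega_1$ or on $K\cap\partial\Omega_2$, since otherwise the desired fixed point already exists.

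The heart of the proof is two index computations. First: if $\|Au\|\le\|u\|$ on $K\cap\partial\Omega_1$, then $i(A,K\cap\Omega_1,K)=1$. I would use the admissible homotopy $H(t,u)=tAu$, $t\in[0,1]$: a fixed point $u\in K\cap\partial\Omega_1$ would give $\|u\|=t\|Au\|\le t\|u\|$ with $\|u\|>0$ (as $0$ is interior to $\Omega_1$), forcing $t=1$ and $Au=u$, contrary to our reduction; hence $i(A,K\cap\Omega_1,K)=i(\mathbf{0},K\cap\Omega_1,K)=1$ by homotopy invariance and normalization. Second: if $\|Au\|\ge\|u\|$ on $K\cap\partial\Omega_2$, then $i(A,K\cap\Omega_2,K)=0$. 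The plan here is to fix $e\in K\setminus\{0\}$, prove that $Au+\mu e\neq u$ for all $u\in K\cap\partial\Omega_2$ and all $\mu\ge 0$, and conclude: the homotopy $H(t,u)=Au+t\mu_0 e$ ($\mu_0$ large, $t\in[0,1]$) is then admissible on $K\cap\partial\Omega_2$, so $i(A,K\cap\Omega_2,K)=i(A+\mu_0 e,K\cap\Omega_2,K)$, and $A+\mu_0 e$ has no fixed point in $K\cap\overline{\Omega}_2$ once $\mu_0$ exceeds $\sup\{\|Au\|+\|u\|:u\in K\cap\overline{\Omega}_2\}$, so this index is $0$.

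Granting these, case (i) follows by additivity of the index over the splitting $K\cap\Omega_2=(K\cap\Omega_1)\cup\bigl(K\cap(\Omega_2\setminus\overline{\Omega}_1)\bigr)$ --- valid because $A$ is fixed-point-free on $K\cap\partial\Omega_1$: it yields $i\bigl(A,K\cap(\Omega_2\setminus\overline{\Omega}_1),K\bigr)=0-1=-1\neq 0$, so by the solution property $A$ has a fixed point in $K\cap(\Omega_2\setminus\overline{\Omega}_1)\subset K\cap(\overline{\Omega}_2\setminus\Omega_1)$. Case (ii) is the same with the roles of $\Omega_1$ and $\Omega_2$ swapped, the index difference being $1-0=1$.

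The real obstacle is the index-zero computation, i.e. showing that $\|Au\|\ge\|u\|$ together with $Au\neq u$ on $K\cap\partial\Omega_2$ permits a choice of $e\in K\setminus\{0\}$ with $Au+\mu e\neq u$ for all such $u$ and all $\mu\ge 0$. The crude estimate $\mu\|e\|=\|u-Au\|\le\|u\|+\|Au\|$ only excludes large $\mu$, so the argument must use the cone structure genuinely --- that $Au,\ \mu e\in K$ and $K\cap(-K)=\{0\}$ --- to rule out every $\mu\ge 0$; this is the delicate part of Krasnoselskii's original construction. The extension of $A$ off the annulus and the verification of the index axioms in the relatively-open setting are routine, and I would treat them briefly.
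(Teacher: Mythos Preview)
The paper does not prove Theorem~1.1 at all: it is quoted verbatim from \cite{Krasn} as a black-box tool, and the text moves directly from its statement to the preliminaries of Section~2. There is therefore no ``paper's own proof'' to compare your proposal against.

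That said, your outline is the standard fixed-point-index route and the index-$1$ step is clean. You are also right that the index-$0$ step is where the content lies, and right to be uneasy about it: the homotopy $u\mapsto Au+\mu e$ does \emph{not} become admissible from the bare hypothesis $\|Au\|\ge\|u\|$ by any simple norm estimate, because in a general Banach space the norm need not be monotone on the cone (so $\|Au+\mu e\|$ can be smaller than $\|Au\|$). Your proposal ultimately defers this point to ``Krasnoselskii's original construction'' rather than resolving it, so as written it is an outline with the key lemma left as a citation --- which, to be fair, is exactly the status the theorem has in the paper itself. If you want a self-contained argument, one standard way around the obstacle is to exploit instead the consequence $Au\neq\mu u$ for $0\le\mu<1$ (immediate from $\|Au\|\ge\|u\|$) together with $\inf_{K\cap\partial\Omega_2}\|Au\|>0$, and run a dilation homotopy rather than a translation; see, e.g., Guo--Lakshmikantham or Deimling for the details.
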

 }

\section{\protect\footnotesize Preliminaries}

{\footnotesize \medskip
\begin{lemma}
{\footnotesize  \label{lem 2.1} Let $\alpha\eta\neq1$. Then for $y\in C([0,1],\mathbb{R})$, the problem
\begin{equation}\label{eq-2.1}
{u^{\prime \prime }}(t)+y(t)=0, \ t\in (0,1),
\end{equation}
\begin{equation}\label{eq-2.2}
{u^{\prime}}(0)=0, \ u(1)=\alpha \int_{0}^{\eta }u(s)ds,
\end{equation}
has a unique solution
\begin{equation}\label{eq-2.3}
u(t)=\int_{0}^{1}G(t, s)y(s)ds,
\end{equation}
where $G(t, s):[0, 1]\times[0, 1]\rightarrow \mathbb{R}$ is the Green's function defined by
\begin{equation} \label{eq-2.4}
G(t, s)=\frac{1}{2(1-\alpha\eta)}\begin{cases} 2(1-s)-\alpha(\eta-s)^{2}-2(1-\alpha\eta)(t-s),& s\leq \min\{\eta, t\};  \\
2(1-s)-\alpha(\eta-s)^{2}, &t \leq s\leq \eta;\\
2(1-s)-2(1-\alpha\eta)(t-s), &\eta \leq s\leq t; \\
2(1-s), &\max\{\eta, t\}\leq s.
\end{cases}
\end{equation} }
\end{lemma}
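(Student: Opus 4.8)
The plan is to solve the ODE \eqref{eq-2.1} by two direct integrations and then pin down the two constants of integration using the boundary conditions \eqref{eq-2.2}. Starting from $u''(t)=-y(t)$, integrate once from $0$ to $t$ and use $u'(0)=0$ to get $u'(t)=-\int_0^t y(s)\,ds$; integrate again from $0$ to $t$ to obtain
\begin{equation*}
u(t)=u(0)-\int_0^t (t-s)y(s)\,ds,
\end{equation*}
where I have used the standard reduction of the double integral $\int_0^t\!\int_0^\tau y(s)\,ds\,d\tau$ to $\int_0^t (t-s)y(s)\,ds$. So the only unknown constant is $u(0)=:C$, and everything now reduces to determining $C$ from the nonlocal condition $u(1)=\alpha\int_0^\eta u(s)\,ds$.

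Next I would substitute the expression for $u$ into both sides of that condition. On the left, $u(1)=C-\int_0^1(1-s)y(s)\,ds$. On the right, $\int_0^\eta u(\tau)\,d\tau = C\eta - \int_0^\eta\!\int_0^\tau (\tau-s)y(s)\,ds\,d\tau$; switching the order of integration in the iterated integral turns it into $\int_0^\eta \big(\int_s^\eta (\tau-s)\,d\tau\big) y(s)\,ds = \int_0^\eta \tfrac{(\eta-s)^2}{2}\,y(s)\,ds$. Equating the two sides and solving the resulting linear equation for $C$ (this is where the hypothesis $\alpha\eta\neq 1$ is needed, so that the coefficient $1-\alpha\eta$ of $C$ is nonzero) yields
\begin{equation*}
C=\frac{1}{1-\alpha\eta}\left(\int_0^1(1-s)y(s)\,ds-\frac{\alpha}{2}\int_0^\eta(\eta-s)^2 y(s)\,ds\right).
\end{equation*}

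Finally I would plug this value of $C$ back into $u(t)=C-\int_0^t(t-s)y(s)\,ds$, put everything over the common denominator $2(1-\alpha\eta)$, and collect the integrands as coefficients of $y(s)$. The kernel $G(t,s)$ is then read off by bookkeeping which of the three terms $2(1-s)$, $-\alpha(\eta-s)^2$, $-2(1-\alpha\eta)(t-s)$ actually contribute for a given $s$: the term $-\alpha(\eta-s)^2$ is present only when $s\le\eta$, and the term $-2(1-\alpha\eta)(t-s)$ only when $s\le t$; the four cases in \eqref{eq-2.4} are exactly the four sign patterns of $s-\eta$ and $s-t$. Uniqueness is immediate since the difference of two solutions solves the homogeneous problem, which by the same computation forces $C=0$ and hence $u\equiv 0$; one should also check continuity of $G$ along the lines $s=t$ and $s=\eta$ to confirm the case definitions are consistent. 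The only mildly delicate point is organizing the case analysis for $G$ cleanly — especially keeping track of $\min\{\eta,t\}$ versus $\max\{\eta,t\}$ — but there is no real obstacle; it is a routine, if slightly tedious, verification.
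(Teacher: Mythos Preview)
Your proposal is correct and follows essentially the same route as the paper: integrate twice using $u'(0)=0$ to get $u(t)=u(0)-\int_0^t(t-s)y(s)\,ds$, compute $\int_0^\eta u$ by swapping the order of integration, solve the linear equation for $u(0)$ using $\alpha\eta\neq 1$, and then read off $G(t,s)$ by case-splitting on the positions of $s$ relative to $t$ and $\eta$. The only cosmetic difference is that the paper organizes the final bookkeeping by first fixing $t\le\eta$ versus $t\ge\eta$ and then splitting the $s$-integral, whereas you describe it as tracking which of the three summands $2(1-s)$, $-\alpha(\eta-s)^2$, $-2(1-\alpha\eta)(t-s)$ is active; the outcome is identical.
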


\begin{proof}
{\footnotesize From \eqref{eq-2.1}, we have
\begin{equation}\label{eq-2.5}
u(t)=u(0)-\int_{0}^{t}(t-s)y(s)ds.
\end{equation}
Integrating \eqref{eq-2.5} from $0$ to $\eta $, where $\eta \in
(0,1)$, we have
\begin{eqnarray*}
\int_{0}^{\eta }u(s)ds=u(0){\eta}-\frac{1}{2}\int_{0}^{\eta }(\eta -s)^{2}y(s)ds.
\end{eqnarray*}
Since
\begin{equation*}\label{eq-14}
u(1)=u(0)-\int_{0}^{1}(1-s)y(s)ds,
\end{equation*}
from $\ u(1)=\alpha \int_{0}^{\eta }u(s)ds$, we have
\begin{equation*}\label{eq-15}
(1-\alpha \eta )u(0)=\int_{0}^{1}(1-s)y(s)ds-\frac{\alpha }{2}\int_{0}^{\eta }(\eta
-s)^{2}y(s)ds.
\end{equation*}
Therefore,
\begin{eqnarray*}
u(0)=\frac{1}{1-\alpha \eta}\int_{0}^{1}(1-s)y(s)ds -\frac{\alpha} {2(1-\alpha \eta)}\int_{0}^{\eta }(\eta
-s)^{2}y(s)ds,
\end{eqnarray*}
from which it follows that
\begin{eqnarray*}
u(t)&=&\frac{1}{1-\alpha\eta}\int_{0}^{1}(1-s)y(s)ds-\frac{\alpha}{2(1-\alpha\eta)}\int_{0}^{\eta }(\eta
-s)^{2}y(s)ds \\
&&-\int_{0}^{t}(t-s)y(s)ds.
\end{eqnarray*}

If $t\leq\eta$, then
\begin{eqnarray*}
u(t)&=&\int_{0}^{\eta}\frac{1-s}{1-\alpha\eta}y(s)ds-
\int_{0}^{\eta}\frac{\alpha(\eta-s)^{2}}{2(1-\alpha\eta)}y(s)ds+
\int_{\eta}^{1}\frac{1-s}{1-\alpha\eta}y(s)ds\\
&&-\int_{0}^{t}(t-s)y(s)ds \\
&=& \int_{0}^{t}\frac{2(1-s)-\alpha(\eta-s)^{2}-2(1-\alpha\eta)(t-s)}{2(1-\alpha\eta)}y(s)ds+
\int_{\eta}^{1}\frac{1-s}{1-\alpha\eta}y(s)ds\\
&&+\int_{t}^{\eta}\frac{2(1-s)-\alpha(\eta-s)^{2}}{2(1-\alpha\eta)}y(s)ds \\
&=&\int_{0}^{1}G(t, s)y(s)ds.
\end{eqnarray*}
If $t\geq\eta$, then
\begin{eqnarray*}
u(t)&=&\int_{0}^{t}\frac{1-s}{1-\alpha\eta}y(s)ds-
\int_{0}^{\eta}\frac{\alpha(\eta-s)^{2}}{2(1-\alpha\eta)}y(s)ds+
\int_{t}^{1}\frac{1-s}{1-\alpha\eta}y(s)ds\\
&&-\int_{0}^{t}(t-s)y(s)ds \\
&=& \int_{0}^{\eta}\frac{2(1-s)-\alpha(\eta-s)^{2}-2(1-\alpha\eta)(t-s)}{2(1-\alpha\eta)}y(s)ds+
\int_{t}^{1}\frac{1-s}{1-\alpha\eta}y(s)ds\\
&&+\int_{\eta}^{t}\frac{(1-s)-(1-\alpha\eta)(t-s)}{1-\alpha\eta}y(s)ds \\
&=&\int_{0}^{1}G(t, s)y(s)ds.
\end{eqnarray*}
This completes the proof.
 }
\end{proof}
{\footnotesize \medskip For convenience, we define
\[g(s)=\frac{1}{1-\alpha\eta}(1-s), \ \ s\in[0, 1].\]
For the Green's function $G(t, s)$, we have the following two lemmas.}

\begin{lemma}
{\footnotesize  \label{lem 2.2} Let $0<\eta <1$ and $0<\alpha <\frac{1}{\eta}$. Then the
Green's function in \eqref{eq-2.4} satisfies
\begin{equation} \label{eq-2.6}
0\leq G(t, s)\leq g(s),
\end{equation}
for each $s, t\in[0, 1]$.
}
\end{lemma}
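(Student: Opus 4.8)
The plan is to verify the inequality $0 \le G(t,s) \le g(s)$ piecewise, according to the four cases in the definition \eqref{eq-2.4}, since in every case $g(s) = \frac{1}{1-\alpha\eta}(1-s)$ and the common prefactor $\frac{1}{2(1-\alpha\eta)}$ is positive because $0 < \alpha\eta < 1$. Thus it suffices to prove, for the numerator $N(t,s)$ appearing in each case, the two-sided bound $0 \le N(t,s) \le 2(1-s)$, and then divide through.

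First I would treat the easiest branch $\max\{\eta,t\} \le s$, where $N(t,s) = 2(1-s)$; here both inequalities are immediate since $s \le 1$. Next, for the branch $t \le s \le \eta$ we have $N(t,s) = 2(1-s) - \alpha(\eta-s)^2$. The upper bound $N \le 2(1-s)$ is clear because $\alpha(\eta-s)^2 \ge 0$. For the lower bound $N \ge 0$, I would estimate $\alpha(\eta-s)^2 \le \alpha(\eta - s)(\eta - s) \le \alpha \cdot \frac{1}{\eta} \cdot \eta \cdot (1-s)$... more carefully: on this range $0 \le \eta - s \le \eta$ and $\eta - s \le 1 - s$, so $(\eta-s)^2 \le \eta(1-s)$, hence $\alpha(\eta-s)^2 \le \alpha\eta(1-s) \le (1-s) \le 2(1-s)$, giving $N(t,s) \ge 2(1-s) - (1-s) \ge 0$.

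Then I would handle $\eta \le s \le t$, where $N(t,s) = 2(1-s) - 2(1-\alpha\eta)(t-s)$. The upper bound is again immediate since $(1-\alpha\eta)(t-s) \ge 0$. For the lower bound, since $s \le t \le 1$ we have $t - s \le 1 - s$, so $2(1-\alpha\eta)(t-s) \le 2(1-\alpha\eta)(1-s) \le 2(1-s)$, whence $N(t,s) \ge 0$. Finally, for the most delicate branch $s \le \min\{\eta,t\}$, where $N(t,s) = 2(1-s) - \alpha(\eta-s)^2 - 2(1-\alpha\eta)(t-s)$, the upper bound follows by discarding the two nonnegative subtracted terms. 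For the lower bound I would combine the two estimates already used: $\alpha(\eta-s)^2 \le \alpha\eta(1-s)$ and $2(1-\alpha\eta)(t-s) \le 2(1-\alpha\eta)(1-s)$, so $N(t,s) \ge 2(1-s) - \alpha\eta(1-s) - 2(1-\alpha\eta)(1-s) = (1-s)\bigl(2 - \alpha\eta - 2 + 2\alpha\eta\bigr) = \alpha\eta(1-s) \ge 0$. I expect this last case to be the main obstacle, since it requires bounding two subtracted terms simultaneously and checking that the resulting coefficient of $(1-s)$ is nonnegative; the bound $t - s \le 1 - s$ (valid because $t \le 1$) is the crucial observation making it work, and one should double-check that equality/boundary configurations (e.g. $s = 0$, $t = 1$) do not violate the estimate.
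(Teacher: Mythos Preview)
Your proof is correct and follows the same case-by-case verification as the paper, with the upper bound $G(t,s)\le g(s)$ being immediate in all four branches and the lower bound $G(t,s)\ge 0$ checked separately for each. The individual estimates you use are essentially those of the paper (e.g.\ $(\eta-s)^2\le\eta(1-s)$ is equivalent to the paper's $\alpha(\eta-s)^2\le(\eta-s)^2/\eta$ combined with $s\le\eta$).

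The one place where your argument differs is the branch $s\le\min\{\eta,t\}$. The paper expands the numerator to $2(1-t)-\alpha\eta^2-\alpha s^2+2\alpha\eta t$ and then splits into the two subcases $\eta\le t$ and $t\le\eta$, treating each with a separate rearrangement. Your approach instead bounds the two subtracted terms simultaneously by $\alpha\eta(1-s)$ and $2(1-\alpha\eta)(1-s)$, obtaining the clean lower bound $N(t,s)\ge\alpha\eta(1-s)$ in one stroke, with no subcase split. This is a genuine simplification: it is shorter, avoids the auxiliary algebra, and makes transparent why the two subtracted pieces never overwhelm $2(1-s)$ (their coefficients sum to exactly $2-\alpha\eta<2$). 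The paper's version, on the other hand, yields the slightly sharper intermediate bound $G(t,s)\ge\frac{1-\eta}{1-\alpha\eta}$ in the subcase $s\le t\le\eta$, which is exactly what is reused in the proof of the next lemma (Lemma~\ref{lem 2.3}); your bound $\alpha\eta(1-s)$ would not serve that purpose directly.
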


\begin{proof}
{\footnotesize First of all, note that by \eqref{eq-2.4} it follows that $G(t, s)\leq g(s)$ for any $(t, s)\in[0, 1]\times[0, 1]$.

Next, we will prove that $G(t, s)\geq0$ for any $(t, s)\in[0, 1]\times[0, 1]$.

If $\eta\leq s\leq t$, then
\begin{eqnarray*}
G(t, s)&=&\frac{1}{1-\alpha\eta}\Big((1-s)-(1-\alpha\eta)(t-s)\Big)\\
&=&\frac{1}{1-\alpha\eta}\Big((1-t)+\alpha\eta(t-s)\Big)\geq0.
\end{eqnarray*}
If $t\leq s\leq \eta$, then
\begin{eqnarray*}
G(t, s)&=&\frac{1}{2(1-\alpha\eta)}\Big(2(1-s)-\alpha(\eta-s)^{2}\Big)\\
&\geq&\frac{1}{2(1-\alpha\eta)}\Big(2(1-s)-\frac{(\eta-s)^{2}}{\eta}\Big)\\
&=&\frac{2\eta -\eta^{2}-s^{2}}{2\eta(1-\alpha\eta)}\\
&\geq&\frac{2\eta -2\eta^{2}}{2\eta(1-\alpha\eta)}\\
&=&\frac{1-\eta}{1-\alpha\eta}\\
&\geq&0.
\end{eqnarray*}
If $s\leq\min\{\eta, t\}$, then
\begin{eqnarray*}
G(t, s)&=&\frac{1}{2(1-\alpha\eta)}\Big(2(1-s)-\alpha(\eta-s)^{2}-2(1-\alpha\eta)(t-s)\Big)\\
&=&\frac{1}{2(1-\alpha\eta)}\Big(2(1-t)-\alpha\eta^{2}-\alpha s^{2}+2\alpha\eta t\Big)\\
&\geq&\frac{1}{2(1-\alpha\eta)}\Big(2(1-t)+\alpha\eta(t-\eta)+\alpha s(\eta-s)\Big).
\end{eqnarray*}
We distinguish the following two cases:

If $s\leq\eta\leq t$, then $G(t, s)\geq0$.

If $s\leq t\leq \eta$, then

\begin{eqnarray*}
G(t, s)&=&\frac{1}{2(1-\alpha\eta)}\Big(2(1-t)-\alpha\eta^{2}-\alpha s^{2}+2\alpha\eta t\Big)\\
&\geq&\frac{1}{2(1-\alpha\eta)}\Big(2(1-t)-\alpha(\eta-t)^{2}\Big)\\
&\geq&\frac{1}{2(1-\alpha\eta)}\Big(2(1-t)-\frac{(\eta-t)^{2}}{\eta}\Big)\\
&=&\frac{2\eta -\eta^{2}-t^{2}}{2\eta(1-\alpha\eta)}\\
&\geq&\frac{2\eta-2\eta^{2}}{2\eta(1-\alpha\eta)}\\
&=&\frac{1-\eta}{1-\alpha\eta}\\
&\geq&0.
\end{eqnarray*}
The proof is completed.}
\end{proof}

\begin{lemma}
{\footnotesize  \label{lem 2.3}
Let $0<\eta <1$ and $0<\alpha <\frac{1}{\eta}$. Then for any $(t, s)\in[0, \eta]\times[0, 1]$, $G(t, s)\geq\gamma g(s)$, where
\begin{equation} \label{eq-2.7}
0<\gamma=1-\eta<1.
\end{equation}}
\end{lemma}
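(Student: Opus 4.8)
The goal is to show that on the strip $t\in[0,\eta]$ the Green's function $G(t,s)$ dominates $g(s)$ up to the constant factor $\gamma=1-\eta$, i.e. $G(t,s)\ge(1-\eta)g(s)$ for all $(t,s)\in[0,\eta]\times[0,1]$; the bound $0<\gamma<1$ in \eqref{eq-2.7} is immediate from $0<\eta<1$. The plan is to run through the three branches of the piecewise formula \eqref{eq-2.4} that can occur when $t\le\eta$, namely $s\le\min\{\eta,t\}=s$ (so $s\le t\le\eta$), $t\le s\le\eta$, and $\max\{\eta,t\}=\eta\le s$. Throughout one uses that $1-\alpha\eta>0$ (from $\alpha<1/\eta$) and hence $0\le g(s)=\frac{1-s}{1-\alpha\eta}\le\frac{1}{1-\alpha\eta}$ on $[0,1]$.

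The branch $\eta\le s$ (with $t\le\eta$) is trivial: there \eqref{eq-2.4} gives $G(t,s)=\frac{2(1-s)}{2(1-\alpha\eta)}=g(s)$, and since $g(s)\ge0$ and $0<1-\eta<1$ we get $G(t,s)=g(s)\ge(1-\eta)g(s)=\gamma g(s)$.

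For the two remaining branches I would simply reuse the estimates already performed in the proof of Lemma~\ref{lem 2.2}. In the case $s\le t\le\eta$ that proof shows $G(t,s)\ge\frac{1}{2(1-\alpha\eta)}\bigl(2(1-t)-\alpha(\eta-t)^{2}\bigr)\ge\frac{2\eta-\eta^{2}-t^{2}}{2\eta(1-\alpha\eta)}\ge\frac{2\eta-2\eta^{2}}{2\eta(1-\alpha\eta)}=\frac{1-\eta}{1-\alpha\eta}$, the last inequality using $t\le\eta$; and in the case $t\le s\le\eta$ the same proof yields $G(t,s)\ge\frac{2\eta-\eta^{2}-s^{2}}{2\eta(1-\alpha\eta)}\ge\frac{1-\eta}{1-\alpha\eta}$, using $s\le\eta$. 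Thus in both branches $G(t,s)\ge\frac{1-\eta}{1-\alpha\eta}$, and since $g(s)\le\frac{1}{1-\alpha\eta}$ this forces $G(t,s)\ge\frac{1-\eta}{1-\alpha\eta}\ge(1-\eta)g(s)=\gamma g(s)$. Combining the three branches finishes the proof.

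There is no serious obstacle here, since all the analytic content already appears in Lemma~\ref{lem 2.2}; the only care needed is bookkeeping --- invoking the chain of inequalities from that lemma only in the sub-cases where its intermediate steps ($s^{2}\le t^{2}\le\eta^{2}$ together with $\alpha\eta<1$) are legitimate, and noting that the final descent from the uniform bound $\frac{1-\eta}{1-\alpha\eta}$ to $(1-\eta)g(s)$ is nothing more than the trivial estimate $1-s\le1$. Alternatively one could argue the nontrivial branches directly: for $t\le s\le\eta$ the desired inequality is equivalent to $2\eta(1-s)\ge\alpha(\eta-s)^{2}$, which after using $\alpha<1/\eta$ reduces to $\eta^{2}+2\eta s(1-\eta)-s^{2}\ge0$ on $[0,\eta]$, true because the left-hand side is concave in $s$ and nonnegative at $s=0$ and $s=\eta$; but piggy-backing on Lemma~\ref{lem 2.2} is shorter.
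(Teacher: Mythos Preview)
Your proof is correct and follows essentially the same route as the paper: the same three-case split for $t\in[0,\eta]$, with the nontrivial branches handled via the chain of inequalities from Lemma~\ref{lem 2.2}. The only organizational difference is that the paper forms the ratio $G(t,s)/g(s)$ and bounds it below by $\frac{1-\eta}{1-s}\ge 1-\eta$ (which forces a separate treatment of $s=1$ to avoid division by zero), whereas you pass through the uniform lower bound $G(t,s)\ge\frac{1-\eta}{1-\alpha\eta}$ together with $g(s)\le\frac{1}{1-\alpha\eta}$; the underlying estimates are identical.
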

\begin{proof}
{\footnotesize If $s=1$, then by Lemma \ref{lem 2.2}, the result follows. Now we suppose that $(t, s)\in[0, \eta]\times[0, 1)$.

If $t\leq s\leq \eta$, then
\begin{eqnarray*}
\frac{G(t, s)}{g(s)}&=&\frac{2(1-s)-\alpha(\eta-s)^{2}}{2(1-s)}\\
&\geq&\frac{2(1-s)-\frac{(\eta-s)^{2}}{\eta}}{2(1-s)}\\
&=&\frac{2\eta -\eta^{2}-s^{2}}{2\eta(1-s)}\\
&\geq&\frac{1-\eta}{1-s}\\
&\geq& \gamma.
\end{eqnarray*}

If $s\leq t\leq \eta$, then
\begin{eqnarray*}
\frac{G(t, s)}{g(s)}&=&\frac{2(1-s)-\alpha(\eta-s)^{2}-2(1-\alpha\eta)(t-s)}{2(1-s)}\\
&=&\frac{2(1-t)-\alpha\eta^{2}-\alpha s^{2}+2\alpha\eta t}{2(1-s)}\\
&\geq&\frac{2(1-t)-\alpha(\eta-t)^{2}}{2\eta(1-s)}\\
&\geq&\frac{1-\eta}{1-s}\\
&\geq& \gamma.
\end{eqnarray*}

If $t\leq \eta\leq s$, then

\begin{eqnarray*}
\frac{G(t, s)}{g(s)}&=&1\geq \gamma.
\end{eqnarray*}
Therefore,

\[G(t, s)\geq\gamma g(s),\ \ (t, s)\in[0, \eta]\times[0, 1].\]

The proof is completed. }
\end{proof}

{\footnotesize \medskip Let $E = C([0,1],\mathbb{R})$, and only the sup norm is used. It is easy to see that the BVP \eqref{eq-1.1} and \eqref{eq-1.2} has a solution  $u = u(t)$ if and only if  $u$ is a fixed point of operator $A_{\lambda}$, where
$A_{\lambda}$ is defined by
\begin{eqnarray*}
A_{\lambda}u(t)&=&\frac{\lambda}{1-\alpha\eta}\int_{0}^{1}(1-s)a(s)f(u(s))ds-
\frac{\lambda\alpha}{2(1-\alpha\eta)}\int_{0}^{\eta }(\eta-s)^{2}a(s)f(u(s))ds \\
&&-\lambda\int_{0}^{t}(t-s)a(s)f(u(s))ds\\
&=&\lambda \int_{0}^{1}G(t, s)a(s)f(u(s))ds.
\end{eqnarray*}

Denote
\begin{equation}\label{eq-2.8}
K=\left\{u\in E: u\geq0,  \min_{t\in
[0,\eta]}u(t)\geq \gamma \|u\|\right\},
\end{equation}
where $\gamma$ is defined in \eqref{eq-2.7}.
It is obvious that $K$ is a cone in $E$.}

\begin{lemma}
{\footnotesize  \label{lem 2.4} Assume that {\rm (H1)} and {\rm (H2)} hold, $0<\eta <1$ and $0<\alpha <\frac{1}{\eta}$. Then the operator $A_{\lambda}:K\rightarrow K$ is completely continuous.}
\end{lemma}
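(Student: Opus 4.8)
The plan is to split the argument into two parts: (a) that $A_\lambda$ leaves the cone $K$ invariant, and (b) that $A_\lambda$ is continuous and carries bounded sets into relatively compact sets, so that complete continuity follows from the Arzelà--Ascoli theorem. For (a), fix $u\in K$. Since $\lambda>0$ and $a(s)\geq0$, $f(u(s))\geq0$ by {\rm (H1)} and {\rm (H2)}, positivity of $G$ from Lemma \ref{lem 2.2} gives $A_\lambda u(t)\geq0$ on $[0,1]$. The upper estimate $G(t,s)\leq g(s)$ of Lemma \ref{lem 2.2} then yields
\[\|A_\lambda u\|=\sup_{t\in[0,1]}\lambda\int_{0}^{1}G(t,s)a(s)f(u(s))\,ds\leq\lambda\int_{0}^{1}g(s)a(s)f(u(s))\,ds,\]
while Lemma \ref{lem 2.3} gives, for every $t\in[0,\eta]$,
\[A_\lambda u(t)=\lambda\int_{0}^{1}G(t,s)a(s)f(u(s))\,ds\geq\gamma\,\lambda\int_{0}^{1}g(s)a(s)f(u(s))\,ds\geq\gamma\|A_\lambda u\|.\]
Hence $\min_{t\in[0,\eta]}A_\lambda u(t)\geq\gamma\|A_\lambda u\|$ and $A_\lambda u\in K$, so $A_\lambda:K\to K$ is well defined.

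For the compactness half of (b), let $D\subset K$ be bounded, say $\|u\|\leq L$ for all $u\in D$, and set $M=\max_{0\leq x\leq L}f(x)<\infty$, which exists by {\rm (H1)}. Then $0\leq A_\lambda u(t)\leq\lambda M\int_{0}^{1}g(s)a(s)\,ds$ for all $u\in D$ and $t\in[0,1]$, so $A_\lambda(D)$ is uniformly bounded. For equicontinuity I would differentiate the explicit representation of $A_\lambda u$ recorded just before \eqref{eq-2.8}: the first two integrals there do not depend on $t$, hence
\[(A_\lambda u)'(t)=-\lambda\int_{0}^{t}a(s)f(u(s))\,ds,\qquad |(A_\lambda u)'(t)|\leq\lambda M\int_{0}^{1}a(s)\,ds,\]
which bounds the derivative uniformly over $u\in D$; thus $A_\lambda(D)$ is a uniformly Lipschitz, hence equicontinuous, family, and Arzelà--Ascoli shows $A_\lambda(D)$ is relatively compact.

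It remains to check continuity of $A_\lambda$ on $K$: if $u_n\to u$ in $E$, then $\{u_n\}$ is bounded and $f$ is uniformly continuous on the corresponding compact interval, so $f(u_n(\cdot))\to f(u(\cdot))$ uniformly, and the bound $|A_\lambda u_n(t)-A_\lambda u(t)|\leq\lambda\int_{0}^{1}g(s)a(s)\,|f(u_n(s))-f(u(s))|\,ds$ forces $A_\lambda u_n\to A_\lambda u$ in $E$. Combining this with the relative compactness established above gives the claim. I do not expect a genuine obstacle here: the cone invariance is an immediate consequence of Lemmas \ref{lem 2.2} and \ref{lem 2.3}, and the rest is the standard Arzelà--Ascoli argument; the only point worth a little care is the equicontinuity estimate, where exploiting the uniformly bounded derivative $(A_\lambda u)'$ is cleaner than estimating $|A_\lambda u(t_1)-A_\lambda u(t_2)|$ directly from the piecewise formula \eqref{eq-2.4} for $G$.
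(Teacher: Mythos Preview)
Your argument is correct and follows essentially the same route as the paper: cone invariance via Lemmas~\ref{lem 2.2} and~\ref{lem 2.3}, then complete continuity via Arzel\`a--Ascoli. The paper's own proof is only a two-line sketch invoking these lemmas and the Arzel\`a--Ascoli theorem, so your proposal is in fact a faithful and correct expansion of exactly that sketch.
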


\begin{proof}
{\footnotesize
For $u\in K$, according to the definition of $A_{\lambda}$, Lemma \ref{lem 2.2} and Lemma \ref{lem 2.3}, it is easy to prove that $A_{\lambda}K\subset K$. By the Ascoli-Arzela theorem,
it is easy to show that $A_{\lambda}:K\rightarrow K$ is completely continuous.}
\end{proof}

{\footnotesize \medskip In what follows, for the sake of convenience, set\\
\begin{equation*}\label{eq-20}
\Lambda_{1}=\frac{1}{1-\alpha\eta}\int_{0}^{1}(1-s)a(s)ds, \ \  \Lambda_{2}=\frac{\gamma}{2(1-\alpha\eta)}\int_{0}^{\eta}\Big(2(1-\eta)+\alpha(\eta^{2}-s^{2})\Big)a(s)ds,
\end{equation*}
\begin{equation*}\label{eq-21}
f_0=\lim_{u\to 0+}\frac{f(u)}{u}, \ \
f_{\infty}=\lim_{u\to\infty}\frac{f(u)}{u}.
\end{equation*}
}}
\section{\protect\footnotesize Main results}

{\footnotesize \medskip In this section, we will state and prove our main results.

\begin{theorem}
{\footnotesize \label{theo 3.1}
Suppose that {\rm (H1)} and {\rm (H2)} hold, $0<\eta <1$ and $0<\alpha <\frac{1}{\eta}$. If $\Lambda_{1}f_{0}<\Lambda_{2}f_{\infty}$, then for each $\lambda\in(\frac{1}{\Lambda_{2}f_{\infty}},\frac{1}{\Lambda_{1}f_{0}})$, the BVP \eqref{eq-1.1} and \eqref{eq-1.2}
has at least one positive solution.}
\end{theorem}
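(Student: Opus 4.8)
The plan is to apply Krasnoselskii's theorem (Theorem~\ref{theo 1.1}, alternative (i)) to the completely continuous operator $A_\lambda:K\to K$ from Lemma~\ref{lem 2.4}, using two concentric balls $\Omega_1=\{u\in E:\|u\|<r_1\}$ and $\Omega_2=\{u\in E:\|u\|<r_2\}$ with $0<r_1<r_2$ (so $0\in\Omega_1$ and $\overline{\Omega_1}\subset\Omega_2$), and proving $\|A_\lambda u\|\le\|u\|$ on $K\cap\partial\Omega_1$ while $\|A_\lambda u\|\ge\|u\|$ on $K\cap\partial\Omega_2$. First I would fix $\lambda$ in the stated interval and choose $\varepsilon>0$ small enough that $\lambda(f_0+\varepsilon)\Lambda_1\le 1$ and $\lambda(f_\infty-\varepsilon)\Lambda_2\ge 1$; this is possible exactly because $\Lambda_1 f_0<1/\lambda<\Lambda_2 f_\infty$ (when $f_0=0$ the first requirement only asks $\varepsilon$ to be small, and when $f_\infty=+\infty$ one replaces $f_\infty-\varepsilon$ by an arbitrarily large constant $M$ with $\lambda M\Lambda_2\ge1$).

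For the inner sphere, the definition of $f_0$ provides $r_1>0$ with $f(u)\le(f_0+\varepsilon)u$ on $[0,r_1]$. If $u\in K$ and $\|u\|=r_1$, then $0\le u(s)\le\|u\|$ for every $s$, so by $G(t,s)\le g(s)$ (Lemma~\ref{lem 2.2}),
\[ A_\lambda u(t)\le\lambda\int_0^1 g(s)a(s)(f_0+\varepsilon)u(s)\,ds\le\lambda(f_0+\varepsilon)\|u\|\int_0^1 g(s)a(s)\,ds=\lambda(f_0+\varepsilon)\Lambda_1\|u\|\le\|u\|, \]
which gives $\|A_\lambda u\|\le\|u\|$ on $K\cap\partial\Omega_1$.

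For the outer sphere, the definition of $f_\infty$ gives $\bar r>0$ with $f(u)\ge(f_\infty-\varepsilon)u$ for $u\ge\bar r$; set $r_2=\max\{2r_1,\bar r/\gamma\}$. If $u\in K$ and $\|u\|=r_2$, the cone condition in \eqref{eq-2.8} yields $u(s)\ge\gamma\|u\|\ge\bar r$ for $s\in[0,\eta]$, hence $f(u(s))\ge(f_\infty-\varepsilon)\gamma\|u\|$ there. Evaluating $A_\lambda u$ at any $t_0\in[0,\eta]$ and keeping only the integral over $[0,\eta]$,
\[ \|A_\lambda u\|\ge A_\lambda u(t_0)\ge\lambda(f_\infty-\varepsilon)\gamma\|u\|\int_0^\eta G(t_0,s)a(s)\,ds. \]
To conclude I need the lower bound
\[ G(t,s)\ge\frac{2(1-\eta)+\alpha(\eta^2-s^2)}{2(1-\alpha\eta)},\qquad (t,s)\in[0,\eta]\times[0,\eta], \]
obtained by the same branch-by-branch analysis as in Lemmas~\ref{lem 2.2}--\ref{lem 2.3}: on $\{t\le s\le\eta\}$ the right-hand side sits $\eta-s\ge0$ below the exact value of $G$, while on $\{s\le t\le\eta\}$ it is the minimum over $t\in[s,\eta]$ of the expression $\frac{2(1-t)-\alpha\eta^2-\alpha s^2+2\alpha\eta t}{2(1-\alpha\eta)}$, which is decreasing in $t$. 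Substituting, $\int_0^\eta G(t_0,s)a(s)\,ds\ge\Lambda_2/\gamma$, so $\|A_\lambda u\|\ge\lambda(f_\infty-\varepsilon)\Lambda_2\|u\|\ge\|u\|$ on $K\cap\partial\Omega_2$. Theorem~\ref{theo 1.1}(i) then produces a fixed point $u^\ast\in K$ with $r_1\le\|u^\ast\|\le r_2$; by Lemma~\ref{lem 2.1} it solves \eqref{eq-1.1}--\eqref{eq-1.2}, and since $u^\ast\ge0$ with $\min_{[0,\eta]}u^\ast\ge\gamma r_1>0$ (moreover $u^\ast$ is concave with $u^{\ast\prime}(0)=0$, hence nonincreasing and in fact positive on all of $[0,1]$), it is a positive solution.

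The Krasnoselskii compression/expansion mechanism is routine; the one step that needs care is the outer estimate, i.e. isolating the sharp $s$-dependent lower bound for $G$ on $[0,\eta]^2$ that reproduces precisely the constant $\Lambda_2$ — a cruder bound such as $G\ge\gamma g(s)$ from Lemma~\ref{lem 2.3} would yield only a smaller (hence weaker) admissible $\Lambda_2$ — together with the minor bookkeeping needed to cover the degenerate cases $f_0=0$ and $f_\infty=+\infty$.
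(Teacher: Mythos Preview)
Your proof is correct and follows essentially the same approach as the paper: choose $\varepsilon$ so that $\lambda(f_0+\varepsilon)\Lambda_1\le 1\le \lambda(f_\infty-\varepsilon)\Lambda_2$, use $G(t,s)\le g(s)$ for the compression on the small sphere, and use the cone inequality $u(s)\ge\gamma\|u\|$ on $[0,\eta]$ together with the value of $G$ there for the expansion on the large sphere, then apply Theorem~\ref{theo 1.1}(i). The only cosmetic difference is that the paper evaluates $A_\lambda u$ directly at $t=\eta$ and computes $G(\eta,s)=\frac{2(1-\eta)+\alpha(\eta^2-s^2)}{2(1-\alpha\eta)}$ for $s\in[0,\eta]$, whereas you establish this same expression as a lower bound for $G(t,s)$ over the whole square $[0,\eta]^2$; since your bound is attained at $t=\eta$, the two routes coincide.
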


\begin{proof}
{\footnotesize Let $\lambda\in(\frac{1}{\Lambda_{2}f_{\infty}}, \frac{1}{\Lambda_{1}f_{0}})$, and choose $\epsilon>0$ such that
\begin{equation}\label{eq-3.1}
\frac{1}{\Lambda_{2}(f_{\infty}-\epsilon)}\leq\lambda\leq\frac{1}{\Lambda_{1}(f_{0}+\epsilon)}.
\end{equation}

By the definition of $f_{0}$, there exists $\rho_{1}>0$ such that

\begin{equation}\label{eq-3.2}
f(u)\leq(f_{0}+\epsilon)u,\ \text{for}\ u\in(0,\rho_{1}].
\end{equation}

Let $\Omega_{\rho_{1}}=\left\{u\in E: \|u\|<\rho_{1}\right\}$, then from \eqref{eq-3.1}, \eqref{eq-3.2} and Lemma \ref{lem 2.2}, for any $u\in K\cap \partial\Omega_{\rho_{1}}$, we have
\begin{eqnarray*}
A_{\lambda}u(t)&=&\lambda \int_{0}^{1}G(t, s)a(s)f(u(s))ds \\
&\leq&\lambda \int_{0}^{1}g(s)a(s)f(u(s))ds \\
&=&\frac{\lambda}{1-\alpha\eta}\int_{0}^{1}(1-s)a(s)f(u(s))ds\\
&\leq&\frac{\lambda}{1-\alpha\eta}\int_{0}^{1}(1-s)a(s)(f_{0}+\epsilon)u(s)ds\\
&\leq&\lambda\Lambda_{1}(f_{0}+\epsilon)\|u\|\leq\|u\|,
\end{eqnarray*}
which yields
\begin{equation}\label{eq-3.3}
\|A_{\lambda}u\|\leq\|u\|, \ \ \text{for}\ u\in K\cap\partial\Omega_{\rho_{1}}.
\end{equation}
Further, by the definition of $f_{\infty}$, there exists $\widehat{\rho}_{2}>0$ such that

\begin{equation}\label{eq-3.4}
f(u)\geq(f_{\infty}-\epsilon)u,\ \text{for}\ u\in[\widehat{\rho}_{2}, \infty).
\end{equation}

Now, set $\rho_{2}=\max\left\{2\rho_{1}, \frac{\widehat{\rho}_{2}}{\gamma}\right\}$ and $\Omega_{\rho_{2}}=\left\{u\in E: \|u\|<\rho_{2}\right\}$. Then $u\in K\cap
\partial\Omega_{\rho_{2}}$ implies that
\[ u(t)\geq\gamma\|u\|\geq\widehat{\rho}_{2}, \ \ t\in[0, \eta],\]
and so,

\begin{eqnarray*}
A_{\lambda}u(\eta)&=&\lambda \int_{0}^{1}G(\eta, s)a(s)f(u(s))ds \\
&\geq&\lambda \int_{0}^{\eta}G(\eta, s)a(s)f(u(s))ds \\
&\geq&\lambda \int_{0}^{\eta}G(\eta, s)a(s)(f_{\infty}-\epsilon)u(s)ds \\
&\geq&\lambda \gamma(f_{\infty}-\epsilon)\|u\|\int_{0}^{\eta}G(\eta, s)a(s)ds\\
&=&\lambda\gamma(f_{\infty}-\epsilon)\|u\|\int_{0}^{\eta}
\frac{2(1-s)-\alpha(\eta-s)^{2}-2(1-\alpha\eta)(\eta-s)}{2(1-\alpha\eta)}a(s)ds\\
&=&\lambda(f_{\infty}-\epsilon)\|u\|\frac{\gamma}{2(1-\alpha\eta)}\int_{0}^{\eta}\Big(2(1-\eta)+\alpha(\eta^{2}-s^{2})\Big)a(s)ds\\
&=&\lambda\Lambda_{2}(f_{\infty}-\epsilon)\|u\|\geq\|u\|.
\end{eqnarray*}
This implies that

\begin{equation}\label{eq-3.5}
\|A_{\lambda}u\|\geq\|u\|, \ \ \text{for}\ u\in K\cap\partial\Omega_{\rho_{2}}.
\end{equation}
Therefore, from \eqref{eq-3.3}, \eqref{eq-3.5} and Theorem \ref{theo 1.1}, it follows that $A_{\lambda}$ has a fixed point $u$ with $\rho_{1}\leq\|u\|\leq\rho_{2}$ in $K\cap (\overline{\Omega}_{\rho_{2}}\backslash \Omega _{\rho_{1}})$, which is a desired positive solution of the BVP \eqref{eq-1.1} and \eqref{eq-1.2}.}
\end{proof}

{\footnotesize \medskip By Theorem \ref{theo 3.1} we can easily obtain the following corollary.}

\begin{corollary}
{\footnotesize  \label{cor 3.1}
Assume that {\rm (H1)} and {\rm (H2)} hold, $0<\eta <1$ and $0<\alpha <\frac{1}{\eta}$. Then we have
\begin{enumerate}
  \item If $f_{0}=0,\ f_{\infty}=\infty$, then for each $\lambda\in(0, \infty)$, the BVP \eqref{eq-1.1} and \eqref{eq-1.2} has at least one positive solution.
  \item If $f_{\infty}=\infty, \ 0<f_{0}<\infty$, then for each $\lambda\in(0, \frac{1}{\Lambda_{1}f_{0}})$, the BVP \eqref{eq-1.1} and \eqref{eq-1.2} has at least one positive solution.
  \item If $f_{0}=0, \ 0<f_{\infty}<\infty$, then for each $\lambda\in(\frac{1}{\Lambda_{2}f_{\infty}}, \infty)$, the BVP \eqref{eq-1.1} and \eqref{eq-1.2} has at least one positive solution.
\end{enumerate}}
\end{corollary}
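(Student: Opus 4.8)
The plan is to deduce all three statements directly from Theorem \ref{theo 3.1}, specialising the values of $f_0$ and $f_\infty$ and reading the eigenvalue interval $\big(\frac{1}{\Lambda_2 f_\infty},\frac{1}{\Lambda_1 f_0}\big)$ with the natural conventions $\frac{1}{0}=+\infty$ and $\frac{1}{\infty}=0$. First I would record that both $\Lambda_1$ and $\Lambda_2$ are well defined, finite, and strictly positive: the standing hypotheses $0<\eta<1$ and $0<\alpha<\frac{1}{\eta}$ give $1-\alpha\eta>0$ and $\gamma=1-\eta>0$, while (H2) provides $t_0\in[0,\eta]\subset[0,1)$ with $a(t_0)>0$, so by continuity and nonnegativity of $a$ the weights $(1-s)$ and $2(1-\eta)+\alpha(\eta^2-s^2)$, being positive on a neighbourhood of $t_0$ inside $[0,\eta]$, force $\int_0^1(1-s)a(s)\,ds>0$ and $\int_0^\eta\big(2(1-\eta)+\alpha(\eta^2-s^2)\big)a(s)\,ds>0$; hence $0<\Lambda_1<\infty$ and $0<\Lambda_2<\infty$. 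Complete continuity of $A_\lambda$ on $K$ is already furnished by Lemma \ref{lem 2.4}.

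Next I would check the hypothesis $\Lambda_1 f_0<\Lambda_2 f_\infty$ of Theorem \ref{theo 3.1} case by case. In statement (1), $f_0=0$ gives $\Lambda_1 f_0=0$ and $f_\infty=\infty$ gives $\Lambda_2 f_\infty=\infty$, so $\Lambda_1 f_0<\Lambda_2 f_\infty$ holds and the interval $\big(\frac{1}{\Lambda_2 f_\infty},\frac{1}{\Lambda_1 f_0}\big)$ collapses to $(0,\infty)$. In statement (2), $0<f_0<\infty$ gives $0<\Lambda_1 f_0<\infty$ while $f_\infty=\infty$ gives $\Lambda_2 f_\infty=\infty$, so again the hypothesis holds and the interval is $\big(0,\frac{1}{\Lambda_1 f_0}\big)$. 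In statement (3), $f_0=0$ gives $\Lambda_1 f_0=0$ and $0<f_\infty<\infty$ gives $0<\Lambda_2 f_\infty<\infty$, so the hypothesis holds and the interval is $\big(\frac{1}{\Lambda_2 f_\infty},\infty\big)$. In each case Theorem \ref{theo 3.1} yields at least one positive solution for every $\lambda$ in the stated range.

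If one wishes to avoid the $\infty$-conventions, the alternative is simply to re-run the Krasnoselskii argument used in the proof of Theorem \ref{theo 3.1}: fix $\lambda$ in the relevant interval; pick $\epsilon>0$ with $\lambda\Lambda_1(f_0+\epsilon)\le 1$ (possible since either $\lambda\Lambda_1 f_0<1$ or $f_0=0$), obtaining $\|A_\lambda u\|\le\|u\|$ on $K\cap\partial\Omega_{\rho_1}$ exactly as in \eqref{eq-3.3}; and, when $f_\infty=\infty$, choose $M>0$ with $\lambda\Lambda_2 M\ge 1$ and use $f(u)\ge Mu$ for large $u$ (when $f_\infty<\infty$, instead choose $\epsilon$ with $\lambda\Lambda_2(f_\infty-\epsilon)\ge 1$), obtaining $\|A_\lambda u\|\ge\|u\|$ on $K\cap\partial\Omega_{\rho_2}$ as in \eqref{eq-3.5}; then apply Theorem \ref{theo 1.1}. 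I do not anticipate any genuine obstacle here, since this is a pure specialisation of results already proved; the only point demanding care is the bookkeeping of the limiting values of $\Lambda_1 f_0$ and $\Lambda_2 f_\infty$ and the corresponding endpoints $0$ and $\infty$ of the eigenvalue interval.
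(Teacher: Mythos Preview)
Your proposal is correct and follows exactly the approach the paper itself takes: the paper simply remarks that Corollary~\ref{cor 3.1} follows directly from Theorem~\ref{theo 3.1}, and your specialisation of $f_0$ and $f_\infty$ (with the conventions $1/0=\infty$, $1/\infty=0$) is precisely what is intended. Your additional verification that $\Lambda_1,\Lambda_2\in(0,\infty)$ and your sketch of how to bypass the $\infty$-conventions are more detail than the paper provides, but the underlying route is identical.
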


\begin{theorem}
{\footnotesize \label{theo 3.2}
Suppose that {\rm (H1)} and {\rm (H2)} hold, $0<\eta <1$ and $0<\alpha <\frac{1}{\eta}$. If $\Lambda_{1}f_{\infty}<\Lambda_{2}f_{0}$, then for each $\lambda\in(\frac{1}{\Lambda_{2}f_{0}},\frac{1}{\Lambda_{1}f_{\infty}})$, the BVP \eqref{eq-1.1} and \eqref{eq-1.2} has at least one positive solution.}
\end{theorem}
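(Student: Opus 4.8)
The plan is to run the argument of Theorem~\ref{theo 3.1} with the roles of the limits $f_{0}$ and $f_{\infty}$ interchanged, so that alternative~(ii) of Krasnoselskii's theorem (Theorem~\ref{theo 1.1}) is used in place of alternative~(i). Fix $\lambda\in(\frac{1}{\Lambda_{2}f_{0}},\frac{1}{\Lambda_{1}f_{\infty}})$ and, since $\Lambda_{1}f_{\infty}<\Lambda_{2}f_{0}$, choose $\epsilon>0$ small enough that
\[
\frac{1}{\Lambda_{2}(f_{0}-\epsilon)}\leq\lambda\leq\frac{1}{\Lambda_{1}(f_{\infty}+\epsilon)}
\]
(with the evident modification when $f_{0}=\infty$, replacing $f_{0}-\epsilon$ by an arbitrarily large constant $M$, or when $f_{\infty}=0$). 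By Lemma~\ref{lem 2.4} the operator $A_{\lambda}$ is completely continuous on the cone $K$, so it only remains to exhibit two spheres on which the norm inequalities of Theorem~\ref{theo 1.1}(ii) hold.

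First I would treat the small sphere. By the definition of $f_{0}$ there is $\rho_{1}>0$ such that $f(u)\geq(f_{0}-\epsilon)u$ for $0\leq u\leq\rho_{1}$. Put $\Omega_{\rho_{1}}=\{u\in E:\|u\|<\rho_{1}\}$. For $u\in K\cap\partial\Omega_{\rho_{1}}$ one has $\gamma\|u\|\leq u(s)\leq\|u\|=\rho_{1}$ for $s\in[0,\eta]$, and hence, evaluating at $t=\eta$ and using $G\geq 0$ together with the computation $\int_{0}^{\eta}G(\eta,s)a(s)\,ds=\frac{1}{2(1-\alpha\eta)}\int_{0}^{\eta}\big(2(1-\eta)+\alpha(\eta^{2}-s^{2})\big)a(s)\,ds$ exactly as in the proof of Theorem~\ref{theo 3.1},
\[
A_{\lambda}u(\eta)\geq\lambda\int_{0}^{\eta}G(\eta,s)a(s)f(u(s))\,ds\geq\lambda(f_{0}-\epsilon)\gamma\|u\|\int_{0}^{\eta}G(\eta,s)a(s)\,ds=\lambda\Lambda_{2}(f_{0}-\epsilon)\|u\|\geq\|u\|,
\]
so that $\|A_{\lambda}u\|\geq\|u\|$ for $u\in K\cap\partial\Omega_{\rho_{1}}$.

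Next comes the large sphere, which is the only genuinely delicate point: the inequality coming from $f_{\infty}$, namely $f(u)\leq(f_{\infty}+\epsilon)u$ for $u\geq\widehat{\rho}_{2}$ (valid for some $\widehat{\rho}_{2}>0$), holds only asymptotically, whereas an upper bound for $A_{\lambda}u$ requires control of $f$ on all of $[0,\|u\|]$. I would resolve this by the standard Krasnoselskii device (as in \cite{Tarib}): if $f$ is bounded, say $f\leq N$, take any $\rho_{2}>\max\{\rho_{1},\lambda N\Lambda_{1}\}$; if $f$ is unbounded, choose $\rho_{2}>\max\{\rho_{1},\widehat{\rho}_{2}\}$ with $f(u)\leq f(\rho_{2})$ for all $u\in[0,\rho_{2}]$, so that $f(\rho_{2})\leq(f_{\infty}+\epsilon)\rho_{2}$. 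Setting $\Omega_{\rho_{2}}=\{u\in E:\|u\|<\rho_{2}\}$, for $u\in K\cap\partial\Omega_{\rho_{2}}$ Lemma~\ref{lem 2.2} gives
\[
A_{\lambda}u(t)\leq\lambda\int_{0}^{1}g(s)a(s)f(u(s))\,ds=\frac{\lambda}{1-\alpha\eta}\int_{0}^{1}(1-s)a(s)f(u(s))\,ds\leq\lambda\Lambda_{1}(f_{\infty}+\epsilon)\rho_{2}\leq\rho_{2}=\|u\|
\]
in the unbounded case (in the bounded case the middle terms are bounded by $\lambda N\Lambda_{1}\leq\rho_{2}$), whence $\|A_{\lambda}u\|\leq\|u\|$ for $u\in K\cap\partial\Omega_{\rho_{2}}$.

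Finally, since $0\in\Omega_{\rho_{1}}$ and $\overline{\Omega}_{\rho_{1}}\subset\Omega_{\rho_{2}}$, Theorem~\ref{theo 1.1}(ii) applied to $A_{\lambda}:K\cap(\overline{\Omega}_{\rho_{2}}\backslash\Omega_{\rho_{1}})\to K$ yields a fixed point $u$ with $\rho_{1}\leq\|u\|\leq\rho_{2}$; because $\|u\|\geq\rho_{1}>0$ and $A_{\lambda}K\subset K$, this $u$ is a positive solution of the BVP \eqref{eq-1.1} and \eqref{eq-1.2}. Everything except the global bound on $f$ in the large-sphere step is a verbatim transcription of the argument for Theorem~\ref{theo 3.1} with $f_{0}\leftrightarrow f_{\infty}$ and alternative~(i) replaced by alternative~(ii).
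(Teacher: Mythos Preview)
Your proof is correct and follows essentially the same approach as the paper: choose $\epsilon$ so that $\frac{1}{\Lambda_{2}(f_{0}-\epsilon)}\leq\lambda\leq\frac{1}{\Lambda_{1}(f_{\infty}+\epsilon)}$, obtain $\|A_{\lambda}u\|\geq\|u\|$ on the small sphere by evaluating at $t=\eta$ and using the cone inequality $u(s)\geq\gamma\|u\|$ on $[0,\eta]$, and for the large sphere split into the bounded/unbounded cases for $f$ exactly as you do (the paper takes $\rho_{2}=\max\{2\rho_{1},\lambda L\Lambda_{1}\}$ and $\rho_{2}\geq\max\{2\rho_{1},\gamma^{-1}\rho_{0}\}$ respectively, which are cosmetic variants of your choices). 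Alternative~(ii) of Theorem~\ref{theo 1.1} then yields the fixed point, just as you conclude.
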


\begin{proof}
{\footnotesize
Let $\lambda\in(\frac{1}{\Lambda_{2}f_{0}}, \frac{1}{\Lambda_{1}f_{\infty}})$, and choose $\epsilon>0$ such that
\begin{equation}\label{eq-3.6}
\frac{1}{\Lambda_{2}(f_{0}-\epsilon)}\leq\lambda\leq\frac{1}{\Lambda_{1}(f_{\infty}+\epsilon)}.
\end{equation}

By the definition of $f_{0}$, there exists $\rho_{1}>0$ such that

\begin{equation}\label{eq-3.7}
f(u)\geq(f_{0}-\epsilon)u,\ \text{for}\ u\in(0,\rho_{1}].
\end{equation}

Let $\Omega_{\rho_{1}}=\left\{u\in E: \|u\|<\rho_{1}\right\}$. Hence, for any $u\in K\cap \partial\Omega_{\rho_{1}}$, from \eqref{eq-3.6}, \eqref{eq-3.7}, we get
\begin{eqnarray*}
A_{\lambda}u(\eta)&\geq&\lambda \int_{0}^{\eta}G(\eta, s)a(s)f(u(s))ds \\
&\geq&\lambda \int_{0}^{\eta}G(\eta, s)a(s)(f_{0}-\epsilon)u(s)ds \\
&\geq&\lambda\Lambda_{2}(f_{0}-\epsilon)\|u\|\geq\|u\|.
\end{eqnarray*}
Therefore
\begin{equation}\label{eq-3.8}
\|A_{\lambda}u\|\geq\|u\|, \ \ \text{for}\ u\in K\cap\partial\Omega_{\rho_{1}}.
\end{equation}

By the definition of $f_{\infty}$, there exists $\rho_{0}>0$ such that

\begin{equation}\label{eq-3.9}
f(u)\leq(f_{\infty}+\epsilon)u,\ \text{for}\ u\in[\rho_{0}, \infty).
\end{equation}
Next, we consider two cases:

If $f$ is bounded. Let $f(u)\leq L$ for all $u\in [0,\infty)$. Set
$\rho_{2}=\max\left\{2\rho_{1}, \lambda \Lambda _{1} L\right\}$ and $\Omega_{\rho_{2}}=\left\{u\in E: \|u\|<\rho_{2}\right\}$, then for $u\in K\cap \partial\Omega_{\rho_{2}}$, we have
\begin{eqnarray*}
A_{\lambda}u(t)&\leq&\frac{\lambda}{1-\alpha\eta}\int_{0}^{1}(1-s)a(s)f(u(s))ds\\
&\leq&\lambda L\Lambda_{1}\leq\rho_{2}\leq\|u\|.
\end{eqnarray*}
Therefore
\begin{equation}\label{eq-3.10}
\|A_{\lambda}u\|\leq\|u\|, \ \ \text{for}\ u\in K\cap\partial\Omega_{\rho_{2}}.
\end{equation}
If $f$ is unbounded, then from $f\in C([0,\infty),[0,\infty))$, we know that there is $\rho_{2}$: 
$\rho_{2}\geq\max\left\{2\rho_{1}, \gamma^{-1}\rho_{0}\right\}$ such  that
\begin{equation}\label{eq-3.11}
f(u)\leq f(\rho_{2}),\ \text{for}\ u\in\left[0,\rho_{2}\right].
\end{equation}
Let $\Omega_{\rho_{2}}=\left\{u\in E: \|u\|<\rho_{2}\right\}$, then for $u\in K\cap \partial\Omega_{\rho_{2}}$, we have
\begin{eqnarray*}
A_{\lambda}u(t)&\leq&\frac{\lambda}{1-\alpha\eta}\int_{0}^{1}(1-s)a(s)f(u(s))ds\\
&\leq&\frac{\lambda}{1-\alpha\eta}\int_{0}^{1}(1-s)a(s)f(\rho_{2})ds\\
&\leq&\lambda\Lambda_{1}\rho_{2}(f_{\infty}+\epsilon)\leq\rho_{2}=\|u\|.
\end{eqnarray*}
Thus
\begin{equation}\label{eq-3.12}
\|A_{\lambda}u\|\leq\|u\|, \ \ \text{for}\ u\in K\cap\partial\Omega_{\rho_{2}}.
\end{equation}
It follows from Theorem \ref{theo 1.1}, that $A_{\lambda}$ has a fixed point in $K\cap (\overline{\Omega}_{\rho_{2}}\backslash \Omega _{\rho_{1}})$, such that
$\rho_{1}\leq\|u\|\leq\rho_{2}$.}
\end{proof}
{\footnotesize \medskip From Theorem \ref{theo 3.2}, we have}

\begin{corollary}
{\footnotesize  \label{cor 3.2}
Assume that {\rm (H1)} and {\rm (H2)} hold, $0<\eta <1$ and $0<\alpha <\frac{1}{\eta}$. Then we have
\begin{enumerate}
  \item If $f_{0}=\infty,\ f_{\infty}=0$, then for each $\lambda\in(0, \infty)$, the BVP \eqref{eq-1.1} and \eqref{eq-1.2} has at least one positive solution.
  \item If $f_{\infty}=0, \ 0<f_{0}<\infty$, then for each $\lambda\in(\frac{1}{\Lambda_{2}f_{0}}, \infty)$, the BVP \eqref{eq-1.1} and \eqref{eq-1.2} has at least one positive solution.
  \item If $f_{0}=\infty, \ 0<f_{\infty}<\infty$, then for each $\lambda\in(0, \frac{1}{\Lambda_{1}f_{\infty}})$, the BVP \eqref{eq-1.1} and \eqref{eq-1.2} has at least one positive solution.
\end{enumerate}}
\end{corollary}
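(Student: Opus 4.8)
The plan is to deduce each of the three cases from Theorem \ref{theo 3.2}, or, in the cases where one of the limits is infinite, from a straightforward adaptation of its proof, using the conventions $\frac{1}{\infty}=0$ and $\frac{1}{0}=\infty$ when reading off the endpoints of the $\lambda$-interval. In every case the conclusion ultimately comes from applying Theorem \ref{theo 1.1}(ii) to the completely continuous operator $A_{\lambda}$ (Lemma \ref{lem 2.4}) on a set $K\cap(\overline{\Omega}_{\rho_2}\backslash\Omega_{\rho_1})$ with $\rho_2\geq 2\rho_1$.

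I would first dispose of case (2), which fits Theorem \ref{theo 3.2} directly. Here $0<f_0<\infty$ and $f_\infty=0$, so $\frac{1}{\Lambda_2 f_0}$ is a positive real number, $\frac{1}{\Lambda_1 f_\infty}=\infty$, and the asserted interval $(\frac{1}{\Lambda_2 f_0},\infty)$ is exactly $(\frac{1}{\Lambda_2 f_0},\frac{1}{\Lambda_1 f_\infty})$. Given $\lambda$ in this interval, I would choose $\epsilon>0$ small enough that simultaneously $\epsilon<f_0$, $\lambda\geq\frac{1}{\Lambda_2(f_0-\epsilon)}$ and $\lambda\leq\frac{1}{\Lambda_1\epsilon}=\frac{1}{\Lambda_1(f_\infty+\epsilon)}$; such an $\epsilon$ exists because shrinking $\epsilon$ makes both inequalities easier and $\lambda>\frac{1}{\Lambda_2 f_0}=\lim_{\epsilon\to 0^+}\frac{1}{\Lambda_2(f_0-\epsilon)}$. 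With this $\epsilon$, \eqref{eq-3.6} holds and the proof of Theorem \ref{theo 3.2} applies verbatim.

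Cases (1) and (3) involve $f_0=\infty$, which is not literally covered by Theorem \ref{theo 3.2}, so here I would re-run the first half of its proof with one change. Given $\lambda>0$, set $M=\frac{1}{\lambda\Lambda_2}>0$; since $f_0=\infty$ there is $\rho_1>0$ with $f(u)\geq Mu$ for $u\in(0,\rho_1]$, whence for $u\in K\cap\partial\Omega_{\rho_1}$ the same chain of estimates as in Theorem \ref{theo 3.2} gives $A_\lambda u(\eta)\geq\lambda\Lambda_2 M\|u\|=\|u\|$, so $\|A_\lambda u\|\geq\|u\|$ on $K\cap\partial\Omega_{\rho_1}$. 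For the behaviour at infinity: in case (1), $f_\infty=0$ and every $\lambda>0$ is admissible, so I pick $\epsilon=\frac{1}{\lambda\Lambda_1}>0$ and use $f_\infty=0$ to get $\rho_0$ with $f(u)\leq\epsilon u$ for $u\geq\rho_0$; in case (3), $0<f_\infty<\infty$ and $\lambda<\frac{1}{\Lambda_1 f_\infty}$, so I pick $\epsilon>0$ with $\lambda\leq\frac{1}{\Lambda_1(f_\infty+\epsilon)}$ and use the definition of $f_\infty$ to get $\rho_0$ with $f(u)\leq(f_\infty+\epsilon)u$ for $u\geq\rho_0$. In either case, the bounded/unbounded dichotomy of Theorem \ref{theo 3.2} (taking $\rho_2\geq\max\{2\rho_1,\gamma^{-1}\rho_0\}$, respectively $\rho_2\geq\max\{2\rho_1,\lambda\Lambda_1 L\}$ when $f$ is bounded by $L$) yields $\|A_\lambda u\|\leq\|u\|$ on $K\cap\partial\Omega_{\rho_2}$.

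Finally, in all three cases Theorem \ref{theo 1.1}(ii) applied on $K\cap(\overline{\Omega}_{\rho_2}\backslash\Omega_{\rho_1})$ produces a fixed point $u$ of $A_{\lambda}$ with $\rho_1\leq\|u\|\leq\rho_2$, which is the desired positive solution. The only genuine subtlety — and the step I would treat most carefully — is the handling of $f_0=\infty$: one cannot use the quantity $f_0-\epsilon$ and must instead replace it by a constant $M$ chosen large in terms of $\lambda$ and $\Lambda_2$; everything else is a transcription of the proof of Theorem \ref{theo 3.2}.
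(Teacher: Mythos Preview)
Your proposal is correct and follows the paper's approach: the paper offers no separate proof for this corollary, merely stating ``From Theorem \ref{theo 3.2}, we have'' and regarding it as an immediate consequence. You have simply made explicit what the paper leaves implicit, in particular the handling of the $f_0=\infty$ cases by replacing $f_0-\epsilon$ with a large constant $M=\frac{1}{\lambda\Lambda_2}$; this is exactly the intended reading, and no further work is needed.
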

}
\section{\protect\footnotesize Examples}
{\footnotesize \medskip In this section we present some examples to illustrate our main results.
\begin{exmp}
{\rm \label{eq-1} Consider the boundary value problem
\begin{equation}\label{eq-4.1}
{u^{\prime \prime }}(t)+tu^{p}=0, \  \ 0<t<1,
\end{equation}
\begin{equation}\label{eq-4.2}
{u^{\prime}}(0)=0, \  \ u(1)= 2\int_{0}^{\frac{1}{4}}u(s)ds.
\end{equation}
Set $\alpha=2$, $\eta=1/4$, $a(t)=t$, $f(u)=u^{p}$ $(p\in (0, 1)\cup(1, \infty)$. We can show that
$0<\alpha=2<4=1/\eta$.\\
Now we consider the existence of positive solutions of the problem \eqref{eq-4.1} and \eqref{eq-4.2} in two
cases.\\
Case 1: $p>1$. In this case, $f_{0}=0$, $f_{\infty}=\infty$. Then, by Corollary \ref{cor
3.1}, the BVP \eqref{eq-4.1} and \eqref{eq-4.2} has at least one positive solution.\\
Case 2: $p\in (0, 1)$. In this case, $f_{0}=\infty$, $f_{\infty}=0$. Then, by Corollary \ref{cor
3.2}, the BVP \eqref{eq-4.1} and \eqref{eq-4.2} has at least one positive solution. }
\end{exmp}

\begin{exmp}
{\rm \label{eq-2} Consider the boundary value problem
\begin{equation}\label{eq-4.3}
{u^{\prime \prime }}(t)+\frac{aue^{2u}}{b+e^{u}+e^{2u}}=0, \  \ 0<t<1,
\end{equation}
\begin{equation}\label{eq-4.4}
{u^{\prime}}(0)=0, \  \ u(1)=2\int_{0}^{\frac{1}{3}}u(s)ds.
\end{equation}
Set $\alpha=2$, $\eta=1/3$, $a(t)\equiv1$, $f(u)=(aue^{2u})/(b+e^{u}+e^{2u})$.
we consider the existence of positive solutions of the problem \eqref{eq-4.3} and \eqref{eq-4.4} into the following two cases.

Case 1: If $a=5$, $b=8$, then $f_{0}=\frac{1}{2}$, $f_{\infty}=5$. By calculating, it is easy to obtain that $0<\alpha=2<3=1/\eta$, $\gamma=\frac{2}{3}$. Again
\begin{eqnarray*}
\Lambda_{1}&=&\frac{1}{1-\alpha\eta}\int_{0}^{1}(1-s)a(s)ds=\frac{3}{2},\\
\Lambda_{2}&=&\frac{\gamma}{2(1-\alpha\eta)}\int_{0}^{\eta}\Big(2(1-\eta)+\alpha(\eta^{2}-s^{2})\Big)a(s)ds=\frac{40}{81},\\
\Lambda_{1}f_{0}&=&\frac{3}{4}, \ \ \Lambda_{2}f_{\infty}=\frac{200}{81}.
\end{eqnarray*}

By Theorem \ref{theo 3.1}, we know that for any $\lambda\in(\frac{81}{200}, \frac{4}{3})$, the BVP \eqref{eq-4.3} and \eqref{eq-4.4} has at least one positive solution $u\in C[0, 1]$.

Case 2: If $a=5$, $b=-2$, then $f_{0}=\infty$, $f_{\infty}=5$ and $\Lambda_{1}f_{\infty}=\frac{15}{2}$.

Therefore, by Corollary \ref{cor 3.2}, we know that for any $\lambda\in(0, \frac{2}{15})$, the BVP \eqref{eq-4.3} and \eqref{eq-4.4} has at least one positive solution $u\in C[0, 1]$.}
\end{exmp}

\begin{exmp}
{\rm \label{eq-3}
Consider the boundary value problem
\begin{equation}\label{eq-4.5}
{u^{\prime \prime }}(t)+\frac{1}{5}u(1-\frac{1}{1+u^{2}})=0, \  \ 0<t<1,
\end{equation}

\begin{equation}\label{eq-4.6}
{u^{\prime}}(0)=0, \  \ u(1)=\int_{0}^{\frac{1}{2}}u(s)ds,
\end{equation}
where $\alpha=1$, $\eta=1/2$, $a(t)\equiv\frac{1}{5}$,
$f(u)=u(1-\frac{1}{1+u^{2}})$. By calculating, we have $0<\alpha=1<2=1/\eta$, $\gamma=\frac{1}{2}$, $f_{0}=0$, $f_{\infty}=1$. Again

\begin{eqnarray*}
\Lambda_{2}&=&\frac{\gamma}{2(1-\alpha\eta)}\int_{0}^{\eta}\Big(2(1-\eta)+\alpha(\eta^{2}
-s^{2})\Big)a(s)ds=\frac{7}{120}.\\
\end{eqnarray*}
Hence, by Corollary \ref{cor
3.1}, for any $\lambda\in(\frac{120}{7},\infty)$, the BVP \eqref{eq-4.5} and \eqref{eq-4.6} has at least one positive solution $u\in C[0, 1]$.}
\end{exmp}}

\end{document}